\newtheorem{theorem}{Theorem}[section]
\newtheorem{lemma}[theorem]{Lemma}
\newtheorem{corollary}[theorem]{Corollary}
\numberwithin{equation}{section}
\newtheorem{proposition}[theorem]{Proposition}
\newtheorem{example}[theorem]{Example}
\newcommand{\Z}{\mathbb{Z}}
\newcommand{\Q}{\mathbb{Q}}
\title{Homology cobordism and Seifert fibered $3$-manifolds}
\date{}
\author{Tim D. Cochran$^{\dag}$}
\address{Department of Mathematics MS-136, P.O. Box 1892, Rice University, Houston, TX 77251-1892}
\email{cochran@rice.edu}
\author{Daniel Tanner$^{\dag\dag}$}
\address{Epic Systems, 1979 Milky Way, Veroan, WI 53593}
\email{dtanner@epic.com}
\thanks{$^{\dag}$Partially supported by the National Science Foundation  DMS-1006908}
\thanks{$^{\dag\dag}$Partially supported by the National Science Foundation DMS-0739420}
\subjclass[2000]{Primary 57M; Secondary 57R75}
\begin{document}
\date{\today}
\begin{abstract} It is known that every closed oriented $3$-manifold is homology cobordant to a hyperbolic $3$-manifold. By contrast we show that many homology cobordism classes contain no Seifert fibered $3$-manifold. This is accomplished by determining the isomorphism type of the rational cohomology ring of all Seifert fibered $3$-manifolds with no $2$-torsion in their first homology. Then we exhibit families of examples of $3$-manifolds (obtained by surgery on links), with fixed  linking form and cohomology ring, that  are not homology cobordant to any Seifert fibered space,  These are shown to represent distinct homology cobordism classes using higher Massey products and  Milnor's $\overline{\mu}$-invariants for links. 
\end{abstract}
\maketitle

\section{Introduction}\label{sec:intro}

Two closed oriented $3$-manifolds $M_1$ and $M_2$ are said to be \textit{homology cobordant} if there exists a compact oriented $4$-manifold $W$ such that $\partial W = M_1 \sqcup -M_2$ and the maps $H_n(M_i;\mathbb{Z})\rightarrow H_n(W;\mathbb{Z})$, $i=1,2$, induced by inclusion are isomorphisms for all $n$.  It is easily seen that homology cobordism gives an equivalence relation on $3$-manifolds. This can be considered in either the topological or smooth categories. The set of homology cobordism classes of $3$-manifolds forms a monoid with operation induced by connected sum and identity $[S^3]$. When restricted only to classes of homology $3$-spheres, this set forms an abelian group $\theta^H_3$. There are applications of homology cobordism of $3$-manifolds to the triangulation problem for topological manifolds of high dimension and to classical knot concordance (see ~\cite[Section 7.3]{Saveliev2002}).

In \cite{Li11} Livingston showed that every homology cobordism class contains an irreducible 3-manifold; that is, every $3$-manifold is homology cobordant to an irreducible $3$-manifold. In \cite{Myers1} Myers proved a stronger statement: every $3$-manifold is homology cobordant to a hyperbolic $3$-manifold. Thus every homology cobordism class has a hyperbolic representative. This is consonant with our expectation that hyperbolic $3$-manifolds are in some sense generic. Recall that a Seifert fibered $3$-manifold can be thought of as a circle bundle over a surface with some finite number of exceptional fibers. Here we address the question of which homology cobordism classes of $3$-manifolds contain a Seifert fibered $3$-manifold.  We use elementary arguments to show that the cohomology rings of Seifert fibered spaces are quite special and greatly restrict which homology cobordism classes contain a Seifert fibered space. Specifically, we show that:

\begin{theorem}\label{Main1}
Suppose that $M$ is a closed, connected, oriented 3-manifold such that $H_1(M)$ has no 2-torsion. If $M$ is homology cobordant to a Seifert fibered manifold then
\begin{itemize}
\item if $\beta_1(M)$ is odd, the rational cohomology ring of $M$ is isomorphic to that of $S^1\times \Sigma$ where $\Sigma$ is the closed orientable surface of genus $\frac{\beta_1(M)-1}{2}$; while
\item if $\beta_1(M)$ is even, the rational cohomology ring of $M$ is isomorphic to that of $\#_{\beta_1(M)} (S^1\times S^2)$.
\end{itemize}
\end{theorem}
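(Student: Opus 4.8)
If $W$ is a homology cobordism from $M$ to a Seifert fibered manifold $N$, the inclusions $M,N\hookrightarrow W$ induce isomorphisms $H_*(M;\Z)\cong H_*(W;\Z)\cong H_*(N;\Z)$, hence over the field $\Q$ dual isomorphisms on cohomology; these are pullback maps, so they give ring isomorphisms $H^*(M;\Q)\cong H^*(W;\Q)\cong H^*(N;\Q)$, and moreover $H_1(N)\cong H_1(M)$ has no $2$-torsion and $\beta_1(N)=\beta_1(M)$. So it suffices to compute the rational cohomology ring of a Seifert fibered $3$-manifold $N$ with no $2$-torsion in $H_1$. For a closed oriented $3$-manifold, Poincaré duality makes the pairing $H^1\otimes H^2\to H^3\cong\Q$ perfect, identifying $H^2(N;\Q)\cong H^1(N;\Q)^{*}$, and the cup product $H^1\otimes H^1\to H^2$ is then encoded by the alternating trilinear form $\mu_N(a,b,c)=\langle a\cup b\cup c,[N]\rangle$ on $H^1(N;\Q)$; the pair $(\beta_1(N),\mu_N)$ determines the ring up to isomorphism (standard). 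One checks directly that $\mu=0$ for $\#_k(S^1\times S^2)$, while for $S^1\times\Sigma_g$ (where $\dim H^1=2g+1$) $\mu$ sends $(t,x,y)$ to the intersection pairing $\langle x,y\rangle_{\Sigma_g}$, $t$ spanning the circle factor, and vanishes when all three arguments lie in $H^1(\Sigma_g;\Q)$. So the theorem reduces to showing $\mu_N=0$ when $\beta_1(N)$ is even, and $\mu_N$ is of this last type with $g=(\beta_1(N)-1)/2$ when $\beta_1(N)$ is odd.

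\textbf{Step 2: the base orbifold is orientable.} Let $B$ be the base orbifold with underlying surface $\Sigma$. If $\Sigma$ is non-orientable, then because $N$ is oriented each crosscap generator $v_i$ of $\pi_1(N)$ reverses the regular fiber, $v_ihv_i^{-1}=h^{-1}$, so $2h=0$ in $H_1(N)$. In the standard presentation of $H_1(N)$ — generators $v_1,\dots,v_k$ (crosscaps), $q_1,\dots,q_n$ (meridians of exceptional fibers with invariants $(\alpha_j,\beta_j)$), and $h$, with relations $2h=0$, $\alpha_jq_j+\beta_jh=0$, and $2(v_1+\cdots+v_k)+q_1+\cdots+q_n-bh=0$ — replace $v_k$ by $u=v_1+\cdots+v_k$; then $v_1,\dots,v_{k-1}$ occur in no relation, so $H_1(N)\cong\Z^{k-1}\oplus G$, where $G$ is presented by the square $(n+2)\times(n+2)$ matrix in $u,q_1,\dots,q_n,h$ of determinant $\pm4\prod_j\alpha_j\neq0$. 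Hence $G$ is finite of order divisible by $4$, so by Cauchy's theorem it, and therefore $H_1(N)$, contains $2$-torsion — contradicting the hypothesis. So $\Sigma$ is a closed orientable surface $\Sigma_g$ and $B$ is an orientable $2$-orbifold over it. (This is the only place the $2$-torsion hypothesis enters.)

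\textbf{Step 3: the fibration computation.} Use the Seifert fibration $S^1\hookrightarrow N\xrightarrow{p}B$. Rationally the cone points are invisible, $H^*(B;\Q)\cong H^*(\Sigma_g;\Q)$, and the fibration carries a rational Euler number $e\in\Q$ with Euler class $e\cdot\omega_{\Sigma_g}\in H^2(\Sigma_g;\Q)$; the Gysin sequence — or, elementarily, a Mayer--Vietoris decomposition of $N$ into the trivial circle bundle over $\Sigma_g$ minus the base disks of the exceptional fibers, glued to fibered solid tori, with cup products tracked through it — yields exact sequences $H^{k-2}(\Sigma_g;\Q)\xrightarrow{\cup e}H^k(\Sigma_g;\Q)\xrightarrow{p^{*}}H^k(N;\Q)\xrightarrow{\int_f}H^{k-1}(\Sigma_g;\Q)\xrightarrow{\cup e}H^{k+1}(\Sigma_g;\Q)$. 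If $e\neq0$, then $\cup e$ is a rational isomorphism in the relevant degrees, so $p^{*}$ is an isomorphism on $H^1$ and is zero on $H^2$; hence $\beta_1(N)=2g$ is even and every cup product of $H^1$-classes lies in $p^{*}H^2(\Sigma_g;\Q)=0$, so $\mu_N=0$, matching $\#_{2g}(S^1\times S^2)$. If $e=0$, then $p^{*}$ is injective on $H^1$ and on $H^2$, $\beta_1(N)=2g+1$ is odd, and $H^1(N;\Q)=p^{*}H^1(\Sigma_g;\Q)\oplus\Q\tau$ with $\int_f\tau=1$. Then $p^{*}x\cup p^{*}y=\langle x,y\rangle_{\Sigma_g}\,p^{*}\omega_{\Sigma_g}$, $\tau\cup\tau=0$, triple products inside $p^{*}H^1$ vanish (they pull back from $H^3(\Sigma_g;\Q)=0$), and by the projection formula $\int_f(\tau\cup p^{*}\omega_{\Sigma_g})=\omega_{\Sigma_g}\neq0$, so $\tau\cup p^{*}\omega_{\Sigma_g}$ is a nonzero multiple of $[N]^{*}$. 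After rescaling $\tau$, $\mu_N(\tau,x,y)=\langle x,y\rangle_{\Sigma_g}$ and $\mu_N$ vanishes when all arguments lie in $p^{*}H^1(\Sigma_g;\Q)$ — precisely the form of $S^1\times\Sigma_g$ with $g=(\beta_1(N)-1)/2$. Together with Step 1 this proves the theorem.

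\textbf{Expected main obstacle.} Conceptually the $e=0$ case is the heart: one must see that the extra one-dimensional class $\tau$ pairs nontrivially with the pulled-back area class, which is clean once the Gysin sequence and projection formula are in hand. The actual work is bookkeeping: justifying the Gysin sequence for a Seifert fibration with $\Q$-coefficients (or, to keep everything elementary as the paper advertises, carrying the ring structure through the Mayer--Vietoris decomposition into a circle bundle over a punctured surface plus solid tori); verifying that the rational Euler number vanishes exactly when $\beta_1(N)$ is odd; and the determinant computation of Step 2 ruling out non-orientable base orbifolds.
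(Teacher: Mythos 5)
Your proposal is correct, and Steps 1--2 track the paper closely: the reduction to a Seifert fibered representative via the naturality of cup products under a homology cobordism is exactly the paper's Section~2 argument, and your crosscap/determinant/Cauchy argument ruling out non-orientable bases is a clean variant of the paper's Lemma~\ref{Lem:twotorsion} (the paper instead derives a contradiction directly from the relation $2\Sigma x_i+\Sigma\mu_j=0$ with odd annihilator). Your dichotomy $e=0$ versus $e\neq 0$ is the paper's Lemma~\ref{Lem:regularfiber} in disguise: the rational Euler number vanishes exactly when the regular fiber class $t$ has infinite order in $H_1$, which is how the paper detects the parity of $\beta_1$. Where you genuinely diverge is Step~3. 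The paper never writes down a Gysin sequence; it works homologically, exhibiting explicit embedded surfaces --- the vertical tori $A_i=a_i\times S^1$, $B_i=b_i\times S^1$ and the horizontal surface $T$ built from $\alpha_1\cdots\alpha_k$ parallel copies of $B'$ capped off by meridian disks --- and computes the intersection pairings $H_2\times H_2\to H_1$ and $H_1\times H_2\to H_0$ geometrically, invoking the duality of cup product with transverse intersection. Your class $\tau$ with $\int_f\tau=1$ is Poincar\'e dual to the paper's $\frac{1}{m}[T]$, and your projection-formula computation $\int_f(\tau\cup p^*\omega)=\omega$ is their count $T\cap A_i=m\,a_i$. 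The trade-off is as you anticipate: the Gysin route is shorter and makes the structure of the answer transparent, but you must justify the sequence (and the multiplicativity of $p^*$, and the projection formula) for a map to an orbifold base rather than an honest circle bundle --- rationally this is fine via the Leray spectral sequence or the Mayer--Vietoris decomposition you mention, but that verification is precisely the content that the paper's explicit horizontal-surface construction supplies in elementary geometric form. Either way the argument is complete; just make sure the Gysin justification is actually written out rather than cited as folklore, since for Seifert fibrations with exceptional fibers it is not literally the textbook statement.
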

\begin{corollary}\label{cor:main} Under the hypotheses of Theorem~\ref{Main1}, if $\beta_1(M)$ is odd and at least $3$ then for any non-zero $\alpha\in H^1(M;\Q)$ there exists $\beta\in H^1(M;\Q)$ such that $\alpha\cup \beta\neq 0$. If $\beta_1(M)$ is even then, for any $\alpha,\beta \in H^1(M;\Q)$, $\alpha\cup \beta=0$.
\end{corollary}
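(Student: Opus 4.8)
The plan is to derive Corollary~\ref{cor:main} directly from Theorem~\ref{Main1}. Both assertions concern only the cup-product pairing $H^1(-;\Q)\times H^1(-;\Q)\to H^2(-;\Q)$, so each is preserved by a graded isomorphism of rational cohomology rings; hence it suffices to verify them on the two model spaces supplied by Theorem~\ref{Main1}. Explicitly, I need to show: (i) every product of two degree-one classes vanishes in $H^*(\#_{k}(S^1\times S^2);\Q)$; and (ii) in $H^*(S^1\times\Sigma_g;\Q)$ with $g\ge 1$, every nonzero $\alpha\in H^1$ admits a $\beta\in H^1$ with $\alpha\cup\beta\ne 0$. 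Note that ``$\beta_1(M)$ odd and $\ge 3$'' is precisely the condition $g=\frac{\beta_1(M)-1}{2}\ge 1$, while ``$\beta_1(M)$ even'' places $M$ in the family of case (i).

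For (i): $H^1(S^1\times S^2;\Q)$ is one-dimensional, generated by the pullback $t$ of a generator of $H^1(S^1;\Q)$, and $t\cup t=0$ since this already holds on $S^1$. Degree-one rational cohomology of a connected sum is the direct sum of the summands' contributions (via the pinch maps), and a cup product of classes coming from distinct summands is zero because representatives can be chosen with disjoint support; thus $H^1(\#_k(S^1\times S^2);\Q)$ has a basis $t_1,\dots,t_k$ with $t_i\cup t_j=0$ for all $i,j$, and bilinearity gives $\alpha\cup\beta=0$ for all $\alpha,\beta$.

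For (ii): by the K\"unneth theorem $H^2(S^1\times\Sigma_g;\Q)\cong\big(H^1(S^1;\Q)\otimes H^1(\Sigma_g;\Q)\big)\oplus H^2(\Sigma_g;\Q)$. Write $\alpha=c\,t+\alpha'$ with $c\in\Q$, $t$ the pullback of a generator of $H^1(S^1;\Q)$, and $\alpha'\in H^1(\Sigma_g;\Q)$. If $\alpha'\ne 0$, then Poincar\'e duality on $\Sigma_g$ makes the cup pairing on $H^1(\Sigma_g;\Q)$ nondegenerate, so there is $\beta'\in H^1(\Sigma_g;\Q)$ with $\alpha'\cup\beta'\ne 0$; then $\alpha\cup\beta'=c\,(t\cup\beta')+\alpha'\cup\beta'$ has nonzero component in the $H^2(\Sigma_g;\Q)$ summand and so is nonzero. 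If instead $\alpha'=0$, then $c\ne 0$, and since $g\ge 1$ any nonzero $\gamma\in H^1(\Sigma_g;\Q)$ works: $\alpha\cup\gamma=c\,(t\cup\gamma)$ is nonzero because $\gamma\mapsto t\cup\gamma$ is injective into $H^1(S^1;\Q)\otimes H^1(\Sigma_g;\Q)$.

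I do not expect a genuine obstacle here — everything reduces to a short computation in the cohomology of a product and of a connected sum. The only subtlety is the degenerate case in which $\alpha$ lies entirely along the circle factor; this is exactly where the hypothesis $\beta_1(M)\ge 3$ is used, and indeed when $\beta_1(M)=1$ the model is $S^1\times S^2$, whose unique degree-one class squares to zero, so the conclusion genuinely fails there.
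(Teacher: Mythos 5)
Your proof is correct and is essentially the argument the paper leaves implicit: Theorem~\ref{Main1} identifies the rational cohomology ring of $M$ with that of $S^1\times\Sigma_g$ (with $g=\frac{\beta_1-1}{2}\ge 1$) or of $\#_{\beta_1}(S^1\times S^2)$, and the corollary is just the standard computation of the degree-one cup pairing on these models, which matches the intersection-pairing data computed in the paper's proof of the theorem. Your handling of the two cases ($\alpha'\ne 0$ via nondegeneracy on $\Sigma_g$, and $\alpha'=0$ via $t\cup\gamma\ne 0$) and your remark on why $\beta_1(M)\ge 3$ is needed are all accurate.
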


The above theorems do not tell us anything about homology cobordism classes of rational homology spheres. In this case, we make the following observation as a corollary to a previous calculation of the $\mathbb{Z}_p$ cohomology rings of Seifert fibered spaces \cite{BrydenZ2}.

\begin{theorem}\label{thm:main3}
Suppose that $M$ is a rational homology 3-sphere that is homology cobordant to a Seifert fibered space. Then, for each odd prime $p$, the cup product map $H^1(M;\mathbb{Z}_p)\times H^1(M;\mathbb{Z}_p)\to H^2(M;\mathbb{Z}_p)$ is trivial.
\end{theorem}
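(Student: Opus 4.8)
The plan is to combine two ingredients. First, since $M$ is a rational homology sphere and $\beta_1$ (indeed every integral homology group) is a homology cobordism invariant, the Seifert fibered manifold $N$ to which $M$ is homology cobordant is itself a rational homology sphere. Second, the triviality of the cup product $H^1(\,\cdot\,;\Z_p)\times H^1(\,\cdot\,;\Z_p)\to H^2(\,\cdot\,;\Z_p)$ is a homology cobordism invariant. Granting these, it suffices to prove the theorem in the case that $M=N$ is Seifert fibered, which is where the cited computation of \cite{BrydenZ2} enters.

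To verify the claimed invariance, let $W$ be a homology cobordism with $\partial W=M\sqcup -N$. Since the inclusions induce isomorphisms on integral homology, the universal coefficient theorem shows they induce isomorphisms on $\Z_p$-homology, and hence (the coefficient field $\Z_p$ contributing no Ext term) on $\Z_p$-cohomology. Thus the restriction maps $\rho\colon H^*(W;\Z_p)\to H^*(M;\Z_p)$ and $\sigma\colon H^*(W;\Z_p)\to H^*(N;\Z_p)$ are isomorphisms in every degree, and they are ring homomorphisms. Given $a,b\in H^1(M;\Z_p)$, write $a=\rho(\tilde a)$, $b=\rho(\tilde b)$; then $a\cup b=\rho(\tilde a\cup\tilde b)$, and since $\rho$ is injective in degree $2$ we get $a\cup b=0$ if and only if $\tilde a\cup\tilde b=0$. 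The same applies to $\sigma$, so the degree-one cup product is trivial for $M$ if and only if it is trivial for $N$.

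It remains to check that for a Seifert fibered rational homology sphere $N$ and an odd prime $p$, the product $H^1(N;\Z_p)\times H^1(N;\Z_p)\to H^2(N;\Z_p)$ vanishes. This can be read off from the explicit presentations of the $\Z_p$-cohomology rings of Seifert fibered spaces obtained in \cite{BrydenZ2}: when $\beta_1(N)=0$ and $p$ is odd, no two degree-one classes in that ring have nonzero cup product. I expect the only delicate point to be bookkeeping --- tracking the primes $p$ dividing the multiplicities of the exceptional fibers (and the case of a non-orientable base orbifold) when extracting this consequence from the formulas of \cite{BrydenZ2}. It should be stressed that the oddness of $p$ is essential and not merely technical: for $p=2$ those same computations produce nonzero products, the simplest instance being the lens space $\R P^3=L(2,1)$, where the generator $x\in H^1(\R P^3;\Z_2)$ satisfies $x\cup x\neq 0$. (One could instead attempt a self-contained argument via the Gysin sequence of the Seifert $S^1$-fibration, but over $\Z_p$ this requires a CW or handle decomposition adapted to the Seifert invariants, since $p$ may divide the multiplicities and the naive orbifold Gysin sequence is then unavailable.)
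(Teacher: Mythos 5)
Your proposal is correct and follows essentially the same route as the paper: reduce to the case that $M$ is itself Seifert fibered using the homology cobordism invariance of degree-one cup products, then read off the vanishing for odd $p$ from the explicit $\mathbb{Z}_p$-cohomology ring computations of \cite{BrydenZ2} (the paper cites Theorems 1.1 and 1.3 for orientable base, after noting $g=0$ via its Lemma~\ref{Lem:regularfiber}, and Theorems 1.4 and 1.6 for non-orientable base). The only difference is that you spell out the ring-isomorphism argument for invariance in more detail than the paper, which simply defers to its Section~\ref{sec:invtshomcob}.
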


In Section~\ref{sec:examples} we use these results, and related notions of Milnor's $\overline{\mu}$ invariants for links and higher order Massey products, to exhibit infinite families of distinct homology cobordism classes, with fixed linking form and cohomology ring, that contain no Seifert-fibered $3$-manifold. Here are some sample results:

\newtheorem*{prop:betaodd}{Proposition~\ref{prop:betaodd}}
\begin{prop:betaodd} Suppose $m$ is a positive integer, $T$ is an abelian group of odd order and $\mathcal{L}$ is a non-singular linking form  on $T$. Then there exist an infinite number of homology cobordism classes of oriented $3$-manifolds, each with first homology $H\cong \Z^{2m+1}\oplus T$, with linking form $\mathcal{L}$ and having the cohomology ring of $\#_{2m+1}S^1\times S^2$, none of which contains a Seifert fibered manifold. 
\end{prop:betaodd}

\newtheorem*{prop:betaeven2}{Proposition~\ref{prop:betaeven2}}
\begin{prop:betaeven2} Suppose $M$ is the $3$-manifold obtained by zero framed surgery on a $2r$-component link $L$ in $S^3$ (with pairwise linking numbers zero) for which some Milnor's invariant $\overline{\mu}(ijk)$ is non-zero, then the homology cobordism class of $M$ contains no Seifert fibered manifold.
\end{prop:betaeven2}

\newtheorem*{prop:ratsphere}{Proposition~\ref{prop:ratsphere}}
\begin{prop:ratsphere} Suppose $M$ is the rational homology $3$-sphere obtained as $p$-surgery on each component of the Borromean rings where $p$ is an odd integer. Then the homology cobordism class of $M$ contains no Seifert fibered manifold.
\end{prop:ratsphere}

The much more difficult question of whether or not every integral homology $3$-sphere is smoothly homology cobordant to a Seifert fibered homology $3$-sphere seems to be open.

\section{Basic invariants of homology cobordism}\label{sec:invtshomcob}

We review some classical invariants of oriented homology cobordism, namely the isomorphism types of the homology itself, the cohomology ring, the triple cup product form (all with arbitrary abelian coefficients), and the $\Q/\Z$-linking form on the torsion subgroup of $H_1$. Other classical invariants of homology cobordism are afforded by Massey products and the Lie algebra associated to the lower central series of the fundamental group. Since these invariants obstruct both topological and smooth homology cobordism, henceforth we will not mention a specific category. Our results are the same in either category.

If $M_0$ and $M_1$ are closed oriented connected $3$-manifolds that are (oriented) homology cobordant via the $4$-manifold $W$ then there is an isomorphism between their cohomology rings with any coefficient ring $A$. Specifically if $j_0$ and $j_1$ are the inclusion maps, $M_i\to W$, then the maps $j_1^*\circ \left(j_0^*\right)^{-1}$ give isomorphisms $\phi^i$  on $H^i(M_0,A)$. Since the maps $j_i^*$ induce homomorphisms between cohomology rings, the maps $\phi^i$ preserve cup-products in the sense that
$$
\phi^1(x)\cup\phi^1(y)=\phi^2(x\cup y)
$$
for all $x,y \in H^1(M_0;A)$. Thus an isomorphism of the (oriented) cohomology rings is equivalent to the existence of isomorphisms $\phi^1$ and $\phi^2$ that satisfy this property. A similar result holds for the $\Q/\Z$-linking forms, $\lambda_i$, $i=0,1$ on the torsion subgroup of $H_1(M_i;\Z)$.

This information can be organized differently using the triple cup product form and is sometimes revealing.

\begin{proposition}\label{prop:gerges} If $M_0$ is homology cobordant to $M_1$ then there exists an isomorphism $\phi_1:H_1(M_1;\Z)\to H_1(M_0;\Z)$ such that for all integers $n\geq 0$ the induced maps
$$
\phi^1_n:H^1(M_0;\Z_n)\to H^1(M_1;\Z_n)
$$
satisfy:
\begin{itemize}
\item [a)] $\langle \alpha\cup\beta\cup\gamma, [M_0]\rangle=\langle \phi^1_n(\alpha)\cup\phi^1_n(\beta)\cup\phi^1_n(\gamma),[M_1]\rangle$ where $\alpha,\beta,\gamma\in H^1(M_0;\Z_n)$ and $[M_i]$ denoted the fundamental class (that is to say $\phi_1$ induces an isomorphism between the triple cup product forms); and
\item [b)] if $\lambda_1(x,y)=\lambda_0(\phi_1(x),\phi_1(y))$ for all $x,y\in TH_1(M_1;\Z)$, that is to say $\phi_1$ induces an isomorphism between these forms.
\end{itemize}
\end{proposition}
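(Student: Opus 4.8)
The plan is to realize all of the claimed structure on $M_0$ and $M_1$ as pulled back, along the inclusions $j_0,j_1$, from cohomology of $W$, and to observe that the pushed‑forward fundamental classes agree in $H_3(W;\Z)$; parts (a) and (b) then both reduce to evaluating a single cohomology class of $W$ on this common $3$‑cycle.

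First I would record the standard consequences of the definition. Since $H_*(M_k;\Z)\to H_*(W;\Z)$ is an isomorphism for $k=0,1$, the long exact sequence gives $H_*(W,M_k;\Z)=0$, so the relative cellular chain complex of $(W,M_k)$ is a bounded complex of free abelian groups with vanishing homology, hence chain contractible. It therefore stays acyclic after $-\otimes A$ and after $\Hom(-,A)$ for every abelian group $A$, so $j_k^*\colon H^*(W;A)\to H^*(M_k;A)$ is an isomorphism for all $*$ and all $A$, in particular for $A=\Z_n$ and $A=\Q/\Z$. Set $\phi_1:=(j_0)_*^{-1}\circ(j_1)_*\colon H_1(M_1;\Z)\to H_1(M_0;\Z)$; by naturality of the universal coefficient theorem its $\Z_n$‑dual is $j_1^*\circ(j_0^*)^{-1}$, which is precisely the map $\phi^1_n$ in the statement, and $\phi_1$ restricts to an isomorphism on torsion subgroups. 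The only genuinely geometric input is that $(j_1)_*[M_1]=(j_0)_*[M_0]$ in $H_3(W;\Z)$: indeed $[M_1]-[M_0]=[\partial W]=\partial_*[W,\partial W]$, and the composite $H_4(W,\partial W)\xrightarrow{\partial_*}H_3(\partial W)\to H_3(W)$ is zero by exactness.

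For (a), given $\alpha,\beta,\gamma\in H^1(M_0;\Z_n)$ write $\alpha=j_0^*\widetilde\alpha$ and similarly for $\beta,\gamma$, with $\widetilde\alpha,\widetilde\beta,\widetilde\gamma\in H^1(W;\Z_n)$; then $\phi^1_n(\alpha)=j_1^*\widetilde\alpha$, and by naturality of cup products and of the Kronecker pairing both triple products in (a) equal $\langle\widetilde\alpha\cup\widetilde\beta\cup\widetilde\gamma,\,(j_k)_*[M_k]\rangle$, which agree by the previous paragraph. For (b) I would use the Bockstein description of the linking form: for a closed oriented $3$‑manifold $M$ one has $\lambda_M(x,y)=\langle\,\widetilde{D(x)}\cup D(y),\,[M]\,\rangle$, where $D(x)\in TH^2(M;\Z)$ is the Poincaré dual of $x$ and $\widetilde{D(x)}\in H^1(M;\Q/\Z)$ is a lift of it under the Bockstein $\beta\colon H^1(M;\Q/\Z)\to H^2(M;\Z)$ of $0\to\Z\to\Q\to\Q/\Z\to0$ (whose image is exactly $TH^2(M;\Z)$), the value being independent of the lift. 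Given $x,y\in TH_1(M_1;\Z)$, let $X,Y\in H^2(W;\Z)$ be the (necessarily torsion) preimages of $D(x),D(y)$ under the isomorphism $j_1^*$, and let $\widetilde X\in H^1(W;\Q/\Z)$ be a Bockstein lift of $X$; by naturality of $\beta$, $j_0^*\widetilde X$ is a Bockstein lift of $j_0^*X$. The projection formula $(j_k)_*\big(j_k^*(u)\cap z\big)=u\cap(j_k)_*(z)$ with $z=[M_k]$ shows that $(j_k)_*\circ\big((-)\cap[M_k]\big)\circ j_k^*\colon H^2(W;\Z)\to H_1(W;\Z)$ equals $(-)\cap(j_k)_*[M_k]$, hence is independent of $k$; evaluating this identity at $X$ (resp.\ $Y$) and using the definition of $\phi_1$ yields $j_0^*X=D(\phi_1 x)$ (resp.\ $j_0^*Y=D(\phi_1 y)$) in $H^2(M_0;\Z)$. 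Therefore $\lambda_1(x,y)=\langle\widetilde X\cup Y,(j_1)_*[M_1]\rangle$ and $\lambda_0(\phi_1 x,\phi_1 y)=\langle\widetilde X\cup Y,(j_0)_*[M_0]\rangle$, which coincide.

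The statement (a) is essentially immediate. The main obstacle is the bookkeeping in (b): one must check that $\phi_1$ is compatible with Poincaré duality in the two boundary components---equivalently the identity $(j_k)_*\circ\big((-)\cap[M_k]\big)\circ j_k^*=(-)\cap(j_k)_*[M_k]$---and that the Bockstein‑lift formula for $\lambda_M$ is independent of the chosen lift. Both are standard; the first is exactly the projection formula combined with the equality of fundamental classes in $H_3(W;\Z)$, but it is the point at which care is needed.
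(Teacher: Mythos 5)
Your proof is correct and follows essentially the same route the paper takes (the paper only sketches it in the paragraph preceding the proposition): define $\phi^1_n = j_1^*\circ(j_0^*)^{-1}$ using that the inclusions induce isomorphisms on (co)homology with all coefficients, note that $(j_0)_*[M_0]=(j_1)_*[M_1]$ in $H_3(W;\Z)$, and conclude by naturality of cup products, the Kronecker pairing, and the Bockstein. Your write-up supplies the details (chain contractibility of the relative complex, the projection-formula compatibility with Poincar\'e duality, and the Bockstein formula for the linking form) that the paper leaves implicit.
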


It is interesting that in ~\cite[Theorem 3.1]{CGO} it was shown that the existence of an isomorphism $\phi_1$ satisfying $a)$ and $b$ above is \textbf{equivalent to} the condition that $\phi_1$ exists such that $M_0$ and $M_1$ are equal in the bordism group $\Omega_3(K(H_1(M_0),1))$. Another way of saying this is that $M_0$ and $M_1$ are oriented cobordant in such a way that the inclusion-induced maps on \textbf{first} homology are isomorphisms, \textit{if and only if} there exists a $\phi_1$ that satisfies $a)$ and $b)$.

To what extent can these classical invariants be used to show that there exist homology cobordism classes that contain no Seifert fibered manifold?  If $H_1$ contains no $2$-torsion then Bryden-DeLoup have shown that \textit{any} torsion linking pairing can be realized by a Seifert fibered rational homology sphere ~\cite{BrydenD}. Hillman extended their work to the $2$-torsion case and found that there do exist a few linking pairings on $2$-torsion groups that \textit{cannot} be realized by any Seifert fibered space ~\cite[p.476]{HillmanSFS}. It follows from Hillman's result that indeed there do exist homology cobordism classes that contain no Seifert fibered manifold. Hence the torsion linking form gives obstructions in a very narrow range of cases. In the next section we will show, by naive calculations, that the rational cohomology ring gives more severe restrictions. The cohomology rings with $\mathbb{Z}_{p^r}$ coefficients of Seifert fibered spaces  have been calculated \cite{Aaslepp, BrydenZ2}. The techniques used in these calculations are quite intricate (involving a diagonal approximation of an equivariant chain complex of the universal cover). Oddly the computations of the integral and rational cohomology rings of Seifert fibered manifolds seem not to have appeared. 

More generally, Massey products of one dimensional cohomology classes provide obstructions to homology cobordism that go beyond the cup product structure. Specifically, if all the Massey products, $\langle x_1,\dots,x_{k-1}\rangle$,  of classes in $H^1(M;A)$ of length less than $k$ vanish then all the Massey products of length $k$ are uniquely defined in $H^2(M;A)$. If $M_0$ is homology cobordant to $M_1$ then, using naturality properties of Massey products, there must exist isomorphisms $\phi^1, \phi^2$ just as above which induce a correspondence between these length $k$ Massey products. In particular the minimal $k\leq \infty$ such that there exists a non-vanishing product of length $k$, called the \textit{Massey degree} of $M$, is an invariant of homology cobordism. Closely related is the graded Lie algebra associated to $G=\pi_1(M)$, obtained as the direct sum of the successive quotients, $G_k/G_{k+1}$, of the terms of lower central series of $G$ ~\cite{MKS}. The isomorphism type of this graded algebra is an invariant of homology cobordism ~\cite[Lemma 3.1]{St}. In particular if $H_1(M)$ is torsion-free of rank $r$ and $F$ is a free group of rank $r$, then the maximal $k\leq \infty$ such that $F/F_k\cong G/G_k$, the \textit{Milnor degree} of $M$, is an invariant of homology cobordism. For such manifolds the Massey degree is equal to the Milnor degree ~\cite[Prop.6.8]{CGO}\cite[Cor.2.9]{CM3}. In this paper we do not characterize the Massey product structure nor the Lie algebra associated to Seifert fibered spaces. This would be an excellent future project. Instead we use these to show that the number of distinct homology cobordism classes that do \textit{not} contain Seifert fibered manifolds is quite large, since, for example, even among manifolds with trivial cup product structure, there are many distinct homology cobordism classes distinguished by their higher Massey product structure. Examples are given in Section~\ref{sec:examples}.

\section{Proofs of the main theorems}\label{sec:proofs}

Our preferred model for a orientable Seifert fibered space is as follows. Let $B$ be a (possibly non-orientable) surface and form $B'$ from $B$ by removing $k$ open disks. Label the boundary components $b_1, \ldots, b_k$. Let $M'\rightarrow B'$ be the unique circle bundle over $B'$ whose total space is orientable. Note that if $B$ is orientable then $M'=B'\times S^1$. A choice of section $s:B'\rightarrow M'$ and orientation on $M'$ allow us to unambiguously define a meridian $\mu_i$ and a longitude $\lambda_i$ for each torus boundary component $\mathbb{T}_1,\ldots , \mathbb{T}_k$, where $\mu_i$ corresponds to $s(b_i)$ and $\lambda_i$ corresponds to a regular fiber. Form $M$ by doing $\alpha_i/\beta_i$ Dehn filling on each boundary component, where the meridian of the attached torus is glued to a smoothly embedded loop belonging to homotopy class $\alpha_i[\mu_i]+\beta_i[\lambda_i]$ in $\pi_1(\mathbb{T}_i)$. We may assume that both $\alpha_i$ and $\beta_i$ are non-zero. We encode $M$ with notation $M\cong(\pm g\ |\ \alpha_1/\beta_1, \ldots, \alpha_k/\beta_k)$ where $+g$ means $B$ is homeomorphic to a connected sum of g tori and $-g$ means $B$ is homeomorphic to a connected sum of g projective planes. Any closed orientable manifold that is a Seifert fibered space can be described in this way.

Note that the above construction allows us to easily write down a presentation for the fundamental group of $M$; if $M\cong(+g\ |\ \alpha_1/\beta_1, \cdots, \alpha_k/\beta_k)$ then

\begin{center}$\pi_1(M)=\langle x_i, y_i, \mu_j, t\ |\ [x_i,t],\ [y_i,t],\ [\mu_j,t],\ \mu_j^{\alpha_j}t^{\beta_j},\ [x_1, y_1]\cdots [x_g,y_g]\mu_1\cdots\mu_k \rangle$
\end{center}
and if $M\cong(-g\ |\ \alpha_1/\beta_1, \cdots, \alpha_k/\beta_k)$, $g\geq 1$,  then
 
\begin{center}$\pi_1(M)=\langle x_i, \mu_j, t\ |\ x_i t x_i^{-1} t,\ [\mu_j,t],\ \mu_j^{\alpha_j}t^{\beta_j},\ x_1^2\cdots x_g^2 \mu_1\cdots \mu_k  \rangle$
\end{center}
where in both cases $i$ ranges over $\{1, \ldots, g\}$ and $j$ ranges over $\{1, \ldots, k\}$. In both presentations, the generator $t$ is carried by a regular fiber.

\begin{lemma}\label{Lem:twotorsion}
Let $M\cong(-g\ |\ \alpha_1/\beta_1, \ldots, \alpha_k/\beta_k)$, $g\geq 1$. Then $H_1(M)$ has an element of order 2.
\end{lemma}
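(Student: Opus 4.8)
The plan is to abelianize the given presentation of $\pi_1(M)$ and then compare the rank of the resulting presentation matrix for $H_1(M)$ over $\Q$ with its rank over $\Z_2$. Abelianizing, the relators $x_itx_i^{-1}t$ all yield the single relation $2t=0$, the relators $[\mu_j,t]$ become trivial, the relators $\mu_j^{\alpha_j}t^{\beta_j}$ yield $\alpha_j\mu_j+\beta_jt=0$, and the long relator yields $2(x_1+\cdots+x_g)+(\mu_1+\cdots+\mu_k)=0$. One is tempted to conclude immediately by killing $\mu_1,\dots,\mu_k$ and $t$: the resulting quotient of $H_1(M)$ is $\langle x_1,\dots,x_g\mid 2(x_1+\cdots+x_g)=0\rangle\cong\Z_2\oplus\Z^{g-1}$, which contains $2$-torsion. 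But a surjection onto a group with $2$-torsion need not have $2$-torsion in its source (witness $\Z\twoheadrightarrow\Z_2$), so this observation alone is not a proof; the genuine argument is the rank comparison below.

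Thus let $A$ be the $(k+2)\times(g+k+1)$ integer matrix whose rows are the relations $2t=0$, the $k$ relations $\alpha_j\mu_j+\beta_jt=0$, and $2(x_1+\cdots+x_g)+(\mu_1+\cdots+\mu_k)=0$, acting on the generators $x_1,\dots,x_g,\mu_1,\dots,\mu_k,t$, so that $H_1(M)=\Z^{g+k+1}/(\text{row space of }A)$. Over $\Q$: the $k$ rows coming from the Dehn fillings are independent, since each has its nonzero entry $\alpha_j$ in a distinct $\mu_j$-coordinate; the row coming from $2t=0$ is independent of these for the same reason; and, because $g\ge1$, the row coming from the long relator has nonzero entries in the $x_i$-coordinates, in which every other row vanishes, so it too is independent. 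Hence $\rank_\Q A=k+2$ and $\beta_1(M)=(g+k+1)-(k+2)=g-1$. Reducing $A$ modulo $2$, the row coming from $2t=0$ becomes zero, so $\rank_{\Z_2}(A\bmod 2)\le k+1$, and therefore
$$
\dim_{\Z_2}H_1(M;\Z_2)=(g+k+1)-\rank_{\Z_2}(A\bmod 2)\ \ge\ g\ >\ g-1=\beta_1(M),
$$
where the first equality uses that $H_1(M;\Z_2)\cong H_1(M)\otimes\Z_2$ (no Tor term, as $H_0$ is free).

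To finish, recall that for a finitely generated abelian group $G$ with torsion subgroup $T$ one has $\dim_{\Z_2}(G\otimes\Z_2)=\rank(G)+\#\{\text{even invariant factors of }T\}$; applying this to $G=H_1(M)$ together with the strict inequality $\dim_{\Z_2}H_1(M;\Z_2)>\beta_1(M)=\rank H_1(M)$ shows that $T$ has at least one even invariant factor, hence even order, hence an element of order $2$. The only real point requiring care is the rank bookkeeping: the crucial mod-$2$ rank drop is caused precisely by the relator $x_itx_i^{-1}t$, i.e. by an orientation-reversing loop in the non-orientable base $B$, which is exactly why the hypothesis $g\ge1$ is essential and why the argument would break down for a Seifert fibered space over an orientable base.
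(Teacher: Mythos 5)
Your proof is correct. You start from the same abelianized presentation as the paper (relations $2t=0$, $\alpha_j\mu_j+\beta_jt=0$, $2\sum x_i+\sum\mu_j=0$ on generators $x_1,\dots,x_g,\mu_1,\dots,\mu_k,t$), but from there you take a genuinely different route. The paper argues by contradiction at the level of elements: assuming no $2$-torsion it deduces $t=0$, reduces to the case $2\nmid\alpha_j$, forms the odd number $m=\operatorname{lcm}(\operatorname{ord}\mu_j)$, derives $m\sum x_i=0$, and observes that $m\sum x_i$ cannot lie in the span of the relations because every relation has even $x_i$-coefficients while $m$ is odd. You instead run a clean rank comparison on the presentation matrix: $\operatorname{rank}_{\Q}A=k+2$ gives $\beta_1(M)=g-1$, while the row $2t$ dies mod $2$ so $\dim_{\Z_2}(H_1(M)\otimes\Z_2)\ge g>\beta_1(M)$, forcing an even invariant factor. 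Your independence checks over $\Q$ (each filling row is pinned by its $\alpha_j\neq 0$ in a distinct $\mu_j$-slot, the long relator by its $x_i$-entries since $g\ge 1$) are all valid, and your caveat that the naive surjection onto $\Z_2\oplus\Z^{g-1}$ proves nothing is well taken. What each approach buys: the paper's is shorter but leans on a slightly delicate final step (``cannot be a $\Z$-linear combination'') and an implicit reduction to odd $\alpha_j$; yours is more systematic, makes the role of the hypothesis $g\ge 1$ and of the relator $x_itx_i^{-1}t$ completely transparent, and as a bonus computes $\beta_1(M)=g-1$ for the non-orientable base case, which complements Lemma~\ref{Lem:regularfiber}.
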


\begin{proof}
$H_1(M)$ has $\mathbb{Z}$-module presentation 
\begin{center}$\langle x_1,\ldots, x_g, \mu_1,\ldots, \mu_k, t\ |\ 0=\alpha_j\mu_j+\beta_jt = 2t = 2\Sigma x_i + \Sigma \mu_j \rangle$ for $1\leq j\leq k$, $1\leq i \leq g$.\end{center}

Suppose, for the purpose of contradiction, that $H_1(M)$ has no 2-torsion. Then $t=0$ and so $\alpha_j\mu_j=0$. Since $H_1(M)$ has no 2 torsion we may also assume $2\nmid\alpha_j$. Let $m_j=order(\mu_j)$ and $m=lcm(m_j)$. Then $m$ must be odd and $0=m(2\Sigma x_i + \Sigma \mu_j)=2m\Sigma x_i$. Since $H_1(M)$ has no 2-torsion, $m\Sigma x_i=0$. This must be a $\mathbb{Z}$ linear combination of our other relations $\{2\Sigma x_i + \Sigma \mu_j, \alpha_j \mu_j \}$, which it cannot be, since $m$ is odd.
\end{proof}

\begin{lemma}\label{Lem:regularfiber}
Let $M\cong(+g\ |\ \alpha_1/\beta_1, \ldots, \alpha_k/\beta_k)$. Let $\gamma$ be a regular fiber of $M'\rightarrow B'$. If $[\gamma]$ is of finite order in $H_1(M)$ then $\beta_1(M)=2g$. If $[\gamma]$ is of infinite order in $H_1(M)$ then $\beta_1(M)=2g+1$
\end{lemma}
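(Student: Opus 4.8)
The plan is to compute $H_1(M;\Q)$ directly from the presentation given above for $\pi_1(M)$ when $M\cong(+g\ |\ \alpha_1/\beta_1,\ldots,\alpha_k/\beta_k)$, and to track exactly when the class $[\gamma]=[t]$ of a regular fiber survives rationally. Abelianizing, $H_1(M)$ has $\Z$-module presentation on generators $x_1,\ldots,x_g,y_1,\ldots,y_g,\mu_1,\ldots,\mu_k,t$ with relations $\alpha_j\mu_j+\beta_j t=0$ for $1\le j\le k$ and $\mu_1+\cdots+\mu_k=0$ (the surface relation abelianizes to kill the commutators). Tensoring with $\Q$, the generators $x_i,y_i$ are free and contribute a summand $\Q^{2g}$ that does not interact with the rest; so $\beta_1(M)=2g+\dim_\Q V$, where $V$ is the $\Q$-vector space presented by generators $\mu_1,\ldots,\mu_k,t$ and the $k+1$ relations above. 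It therefore suffices to show $\dim_\Q V$ is $0$ when $[t]$ has finite order and $1$ when $[t]$ has infinite order, and that these are the only two possibilities.

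First I would observe that in $V$, each relation $\alpha_j\mu_j+\beta_j t=0$ lets us solve $\mu_j=-(\beta_j/\alpha_j)t$ (legal over $\Q$ since each $\alpha_j\neq 0$), so $V$ is spanned by the single element $t$; hence $\dim_\Q V\le 1$ always. Substituting into the remaining relation $\sum_j\mu_j=0$ gives the single scalar equation $\bigl(\sum_{j=1}^{k}\beta_j/\alpha_j\bigr)\,t=0$ in $V$. Thus $\dim_\Q V=0$ precisely when $\sum_j\beta_j/\alpha_j\neq 0$ (equivalently, $t$ is a torsion class in $H_1(M)$ and $V=0$), and $\dim_\Q V=1$ precisely when $e:=\sum_j\beta_j/\alpha_j=0$ (the rational Euler number vanishes, $t$ has infinite order, $V\cong\Q$). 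Combining with the free $\Q^{2g}$ from the $x_i,y_i$, we get $\beta_1(M)=2g$ in the first case and $\beta_1(M)=2g+1$ in the second, which is exactly the statement.

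To finish I would just match the dichotomy in the statement to this computation: "$[\gamma]$ has finite order in $H_1(M)$" is literally the statement that $t\mapsto 0$ in $V$, i.e. $V=0$, i.e. $\beta_1(M)=2g$; and "$[\gamma]$ has infinite order" means $t$ generates a copy of $\Z$ rationally, i.e. $\dim_\Q V=1$, i.e. $\beta_1(M)=2g+1$. One small point worth spelling out is that the $x_i,y_i$ genuinely split off as a free rational summand: none of the relations involve $x_i$ or $y_i$ after abelianization, so $H_1(M;\Q)\cong\Q^{2g}\oplus V$ as claimed. I do not expect any real obstacle here; the only mild care needed is the bookkeeping that over $\Q$ we may divide by the $\alpha_j$, reducing everything to the single equation $e\cdot t=0$, and that the two cases of the lemma are precisely $e\neq 0$ and $e=0$.
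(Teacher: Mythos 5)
Your proof is correct and follows essentially the same route as the paper: abelianize the presentation, tensor with $\Q$, use the relations $\alpha_j\mu_j+\beta_j t=0$ to eliminate the $\mu_j$, and read off $\beta_1(M)$ from whether $t$ survives rationally. You simply spell out the substitution into $\sum_j\mu_j=0$ (yielding the Euler-number condition) that the paper leaves as ``the assertion follows immediately.''
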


\begin{proof} Abelianizing $\pi_1(M)$ and tensoring with $\mathbb{Q}$ gives $\mathbb{Q}$-module presentation

\begin{center}
$H_1(M;\mathbb{Q})=\langle x_i, y_i, \mu_j, t\ |\ \mu_i=\frac{-\beta_i}{\alpha_i}t,\ \sum_{j=1}^{k}\mu_j=0 \rangle$
\end{center}
where $i$ ranges over $\{1, \ldots, g\}$ and $j$ ranges over $\{1, \ldots, k\}$. Thus over $\mathbb{Q}$ the generators $\mu_i$ are redundant and the assertion follows immediately.\end{proof}

\begin{proof}[Proof of Theorem 1.1]  By Section~\ref{sec:invtshomcob} it suffices to assume that $M$ is itself Seifert fibered. By Lemma \ref{Lem:twotorsion}, since $H_1(M)$ has no 2-torsion, $M$ has an orientable base $B$, so $M\cong(+g\ |\ \alpha_1/\beta_1, \dots, \alpha_k/\beta_k)$. 

First we treat the case that $\beta_1(M)$ is odd. Then Lemma \ref{Lem:regularfiber} implies that $\beta_1(M)=2g+1$, $B$ has genus $g$ and $t=[\gamma]$ is of infinite order for any regular fiber $\gamma$. Choose a symplectic basis $[a_i], [b_i]$ for $H_1(B)$; that is, $a_i$ and $b_i$ are embedded curves in $B$ such that $a_i$ and $b_i$ intersect in a single point and $a_i$ and $b_j$ do no intersect if $i\neq j$. Furthermore, this basis can be chosen such that the curves lie in $B'$. Again by Lemma~\ref{Lem:regularfiber}, $\{[a_i],[b_i],[\gamma]\}=\{x_i,y_i,t\}$  is a basis for $H_1(M;\Q)$.

We will describe a collection of embedded closed oriented surfaces $\{A_i,B_i,T\}$ that is a basis for $H_2(M;\Q)$. Recall that $M'$ is just a product bundle over $B'$, so there exist embedded tori $A_i=a_i\times S^1$ and $B_i=b_i\times S^1$ in $M'\subseteq M$ that intersect either trivially if $i\neq j$ or in a regular fiber $\gamma_i$ if $i=j$. Note that $A_i$ is intersection-dual to $b_i$ and $B_i$ is intersection-dual to $a_i$. We only briefly describe the final surface $T$. For more details we refer the reader to ~\cite[Section 2.1]{Hatcher} where the existence (which is well-known) of this so-called \textit{horizontal surface} is detailed. First take $\alpha_1...\alpha_k$ parallel copies of the $2$-sided surface $B'$ and note that
$$
\partial((\alpha_1...\alpha_k)B')=\coprod_{j=1}^k(\alpha_1...\alpha_k)\mu_j.
$$
There is a $2$-disk $D_j$ associated to the $j^{th}$ Dehn filing whose boundary has the homology type of $\alpha_j\mu_j-\beta_j\gamma_j$. Hence by adjoining numerous copies of these disks to $(\alpha_1...\alpha_j)B'$ we can effectively eliminate copies of $\mu_j$ in favor of copies of $\gamma_j$. We arrive at a new surface, $T^o$, whose boundary is
$$
\partial T^o=\coprod_{j=1}^k(\alpha_1...\hat{\alpha}_j...\alpha_k)\beta_j\gamma_j=t[ \coprod_{j=1}^k(\alpha_1...\hat{\alpha}_j...\alpha_k)\beta_j],
$$
where we have replaced each $\gamma_j$ by $t$. Since $t$ has infinite order in $H_1(M;\Z)$, the coefficient of $t$ in this expression must be equal to zero (this coefficient is $m$ times the generalized Euler number of the Seifert fibration).
Hence $T^o$ represents a $2$-cycle of $M$. With more care (primarily because the $\gamma_j$, for varying $j$, are pairwise isotopic but not identical) one may get a closed oriented embedded surface $T$ as above. A key feature of $T$ is that it has non-zero  homological intersection number $m=\alpha_1...\alpha_k$ with a generic regular fiber $t$ and it has zero homological intersection with $[a_i]$ and $[b_i]$. Thus $\{[B_i],[A_i],\frac{1}{m}[T]\}$ is a basis of $H_2(M;\Q)$  that is intersection-dual (up to signs) to the  the basis $\{x_i,y_i,t\}$ for $H_1(M;\Q)$. 

We will use the fact that ``cup product is dual to intersection''.  To be specific, in order to show that the cohomology rings are isomorphic it suffices to exhibit isomorphisms $\phi_i:H_i(M;\Q)\to H_i(S^1\times \Sigma;\Q)$ for $i=1,2$ which preserve the homological intersection pairings
\begin{equation}\label{eq:pairing1}
H_2(M;\Q)\times H_2(M;\Q)\overset{\bullet}{\longrightarrow} H_1(M;\Q),
\end{equation}
and
\begin{equation}\label{eq:pairing2}
H_1(M;\Q)\times H_2(M;\Q)\overset{\bullet}{\longrightarrow} H_0(M;\Q),
\end{equation}
defined by $\widehat{x\bullet y}\equiv \widehat{x}\cup \widehat{y} $ where  $\widehat{[\star]}$ denotes the poincar\'e dual of $[\star]$.   Moreover we can compute homological intersection pairings by actual intersection, by appealing to a fact about intersections of smoothly embedded submanifolds (see \cite{Bredon1997} or ~\cite{Hutchings}): If $A$ and $B$ are smoothly embedded submanifolds of $M$ with $A$ and $B$ intersecting transversely, then cup products can be computed by
$$
\widehat{[A]}\cup \widehat{[B]} = \widehat{[A\cap B]}
$$
which translates to simply
\begin{equation}\label{eq:bullet}
[A]\bullet [B]=[A\cap B].
\end{equation}

If $\tau$ represents a circle in $S^1\times \Sigma$ and $\{\alpha_i,\beta_i\}$ represents a symplectic basis of $H_1(\Sigma)$, then let $\phi_1$ be the obvious map sending  $[a_i]$ to $\alpha_i$, $[b_i]$ to $\beta_i$ and $t$ to $\tau$. If $\mathcal{A}_i, \mathcal{B}_i$ are represented by the tori $S^1\times \alpha_i$ and $S^1\times \beta_i$ then  let $\phi_2$ be the map sending $[A_i]$ to $\mathcal{A}_i$, $[B_i]$ to $\mathcal{B}_i$ and $\frac{1}{m}[T]$ to $[\Sigma]$. Clearly these induce isomorphisms on rational cohomology. Since we have chosen $\phi_2$ to send the intersection dual of $[a_i]$ to the intersection dual of $\phi_1([a_i])$, et cetera,  the pairing ~(\ref{eq:pairing2}) is automatically preserved. We need only check that the pairing ~(\ref{eq:pairing1}) is preserved, that is that
\begin{equation}\label{eq:checkpairing}
\phi_2(a)\bullet\phi_2(b)=\phi_1(a\bullet b)
\end{equation}
for all $a,b$ in our basis of $H_2(M;\Q)$. Using equation~(\ref{eq:bullet}) we can now compute the pairing ~(\ref{eq:pairing1}) for $M$ and $S^1\times \Sigma$. 
$$
[A_i]\bullet [B_j] = [A_i\cap B_j]=\delta_{ij}[t], ~~\text{and}~~[A_i]\bullet [A_j] = [A_i\cap A_j]=0=[B_i]\bullet [B_j] = [B_i\cap B_j],
$$
where the latter hold since the tori $A_i$ and $B_i$ have product neighborhoods. Similarly since $T$ is embedded and oriented it also has a product neighborhood so
$$
[T]\bullet [T]=0.
$$
Finally, since $T\cap A_i=mB'\cap A_i$ is $m$ copies of $a_i$ and  $T\cap B_i$ is $m$ copies of $b_i$,
$$
\frac{1}{m}[T]\bullet [A_i]=\frac{1}{m}[T\cap a_i]= \frac{1}{m}[ma_i]=[a_i], ~~\text{and}~~~\frac{1}{m}[T]\bullet [B_i]=[b_i].
$$
But clearly the same results hold for our bases for $H_*(S^1\times\Sigma;\Q)$. For example
$$
\phi_2\left(\frac{1}{m}[T]\right)\bullet\phi_2([A_i])=[\Sigma]\bullet [\mathcal{A}_i]=[\alpha_i]=\phi_1([a_i])=\phi_1\left(\frac{1}{m}[T]\bullet [A_i]\right).
$$

Now we treat the case that $\beta_1(M)$ is even. Then Lemma \ref{Lem:regularfiber} implies that $\beta_1(M)=2g$, $B$ has genus $g$ and $t=[\gamma]=0$ in $H_1(M;\Q)$. Choose a symplectic basis $[a_i], [b_i]$ for $H_1(B)$ as before.  By Lemma~\ref{Lem:regularfiber}, $\{[a_i],[b_i],[\gamma]\}=\{x_i,y_i\}$  is a basis for $H_1(M;\Q)$. As above we have a collection of embedded closed oriented tori, $\{A_i,B_i\}$, that is a basis for $H_2(M;\Q)$. Our calculation now yields (since $[t]=0$),
$$
[A_i]\bullet [B_j] = [A_i\cap B_j]=[A_i]\bullet [A_j] = [A_i\cap A_j]=0=[B_i]\bullet [B_j] = [B_i\cap B_j]=0.
$$
Suppose $\{x_i\}$ and $\{S_i\}$, $1\leq i\leq 2g$ be the obvious bases of rational first and second homology of $\#_{2g}(S^1\times S^2)$ where $x_i$ is represented by the $i^{th}$ circle and $S_i$ is represented by the $i^{th}$ $2$-sphere. Let $\phi_1$ take $a_i$ to $x_{2i-1}$ and $[b_i]$ to $x_{2i}$ for $1\leq i\leq g$; and let $\phi_2$ take the $[A_i]$ and $[B_i]$ to the $S_i$ in a similar fashion. Just as above this are isomorphisms that preserve the pairings ~(\ref{eq:pairing2}) and ~(\ref{eq:pairing1}).

This completes the proof of Theorem~\ref{Main1}.

\end{proof}

\begin{proof}[Proof of Theorem~\ref{thm:main3}]
By Section~\ref{sec:invtshomcob} and Proposition~\ref{prop:gerges} the vanishing of all one-dimensional cup products is an invariant of homology cobordism. Thus  we need only show that all cup products of elements of $H^1(M;\mathbb{Z}_p)$ vanish when $M$ is a Seifert fibered space with $\beta_1(M)=0$ and $p\neq 2$. A calculation of the mod $p$ cohomology rings of orientable Seifert fibered manifolds was completed in \cite[Theorems~1.1-1.6]{BrydenZ2}. Suppose $\beta_1(M)=0$. If the base space of $M$ is orientable then $g=0$ by our Lemma~\ref{Lem:regularfiber}. In this case the result now follows from Theorems 1.1 and 1.3 of ~\cite{BrydenZ2}. If the base space is non-orientable the result follows from Theorems 1.4 and 1.6 of ~\cite{BrydenZ2}\end{proof}

\section{Examples}\label{sec:examples}

In this section we exhibit many homology cobordism classes that contain no Seifert fibered $3$-manifold.

\begin{proposition}\label{prop:betaodd} Suppose $m$ is a positive integer, $T$ is an abelian group of odd order and $\mathcal{L}$ is a non-singular linking form  on $T$. Then there exist an infinite number of homology cobordism classes of oriented $3$-manifolds, each with first homology $H\cong \Z^{2m+1}\oplus T$, with linking form $\mathcal{L}$ and having the cohomology ring of $\#_{2m+1}S^1\times S^2$, none of which contains a Seifert fibered manifold. 
\end{proposition}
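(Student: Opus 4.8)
\emph{Proof proposal.} The plan is to produce the required $3$-manifolds by surgery on links: a connected summand will install the torsion linking pair $(T,\mathcal{L})$, and zero-framed surgery on carefully chosen links will control both the rational cohomology ring and the higher Massey-product structure. The non-existence of a Seifert fibered representative will then be immediate from Theorem~\ref{Main1}, while the fact that infinitely many homology cobordism classes arise will follow from the Milnor/Massey degree.

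First I would fix a closed oriented $3$-manifold $N$ with $H_1(N)\cong T$ and $\Q/\Z$-linking form $\mathcal{L}$. Since $|T|$ is odd, $T$ has no $2$-torsion, so such an $N$ exists by the theorem of Bryden--DeLoup ~\cite{BrydenD} (one may even take $N$ a Seifert fibered rational homology sphere); if $T=0$ we take $N=S^3$. Next, for each integer $j\geq 1$ I would choose a $(2m+1)$-component link $L_j\subset S^3$ with all pairwise linking numbers zero and all triple Milnor invariants $\overline{\mu}(pqr)$ zero, arranged so that the zero-framed surgeries $M_j:=S^3_0(L_j)$ have pairwise distinct Milnor degrees, each at least $3$ (for instance $L_j$ may be taken so that its first non-vanishing $\overline{\mu}$-invariant has length $j+3$). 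Such links are provided by Milnor's realization results for $\overline{\mu}$-invariants together with the translation between link invariants and invariants of the surgered manifold in ~\cite{CGO}. Since the linking matrix of $L_j$ vanishes, $H_1(M_j)\cong\Z^{2m+1}$ is torsion-free, and since the linking numbers and triple $\overline{\mu}$-invariants vanish, all cup products of $1$-dimensional classes of $M_j$ vanish, so by Poincar\'e duality $M_j$ has the rational cohomology ring of $\#_{2m+1}(S^1\times S^2)$. Set $X_j:=M_j\# N$. Then $H_1(X_j)\cong\Z^{2m+1}\oplus T$; the $\Q/\Z$-linking form of $X_j$ is the orthogonal sum of the (trivial) form of $M_j$ and the form $\mathcal{L}$ of $N$; and since $N$ is a rational homology sphere, $H^*(X_j;\Q)\cong H^*(M_j;\Q)$ as rings. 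Thus each $X_j$ has first homology $\Z^{2m+1}\oplus T$, linking form $\mathcal{L}$, and the rational cohomology ring of $\#_{2m+1}(S^1\times S^2)$.

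No $X_j$ is homology cobordant to a Seifert fibered manifold: $\beta_1(X_j)=2m+1$ is odd and at least $3$ (as $m\geq 1$), and $H_1(X_j)$ has no $2$-torsion, so if $X_j$ were homology cobordant to a Seifert fibered manifold then by Corollary~\ref{cor:main} there would exist non-zero $\alpha,\beta\in H^1(X_j;\Q)$ with $\alpha\cup\beta\neq 0$, contradicting the vanishing of all cup products. Finally, to see that the $X_j$ lie in infinitely many distinct homology cobordism classes I would use the rational Massey degree, recorded in Section~\ref{sec:invtshomcob} as a homology cobordism invariant. The degree-one pinch map $X_j=M_j\# N\to M_j$ induces a ring isomorphism on $\Q$-cohomology and is onto on $H^1(\,\cdot\,;\Q)$, so every $1$-dimensional $\Q$-class of $X_j$ is pulled back from $M_j$; by naturality of Massey products one checks (pulling back defining systems and comparing indeterminacies) that the rational Massey degree of $X_j$ equals that of $M_j$. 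Since $H_1(M_j)$ is torsion-free, the latter equals the Milnor degree of $M_j$ by ~\cite[Prop.~6.8]{CGO}, and we arranged these to be pairwise distinct. Hence the $X_j$ are pairwise non-homology cobordant, which proves the proposition.

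The step I expect to be the main obstacle is confirming that the connected summand $N$ cannot corrupt the higher Massey (equivalently, lower-central-series) information --- precisely, that the pinch map transports Massey products and their indeterminacy faithfully, so that the torsion in $H_1(X_j)$ does not lower the rational Massey degree. An alternative I would also pursue is to realize $N$ as surgery on a framed link $C$ split from $L_j$ and to argue with the single surgery description $L_j\sqcup C$, so that $X_j$ is literally a surgered manifold and the longitude calculus of Milnor and ~\cite{CGO} applies directly.
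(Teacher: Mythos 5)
Your proposal is correct and follows essentially the same route as the paper: realize $(T,\mathcal{L})$ by a (Seifert fibered) rational homology sphere via Bryden--DeLoup, connect-sum with zero-surgery manifolds on links having vanishing linking numbers and trivial cup products but pairwise distinct rational Massey/Milnor degrees, and invoke Corollary~\ref{cor:main} together with the homology cobordism invariance of the Massey degree. The only cosmetic differences are that the paper uses a specific $3$-component link family $L_d$ (plus $S^1\times S^2$ summands) rather than a single $(2m+1)$-component link, and it does not spell out the pinch-map argument that the rational-homology-sphere summand cannot alter the rational Massey degree, which you rightly flag and correctly resolve.
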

\begin{proof} Since  $T$ contains no $2$-torsion, $\mathcal{L}$ can be realized by a Seifert fibered rational homology sphere $Q$ ~\cite{BrydenD}. Thus $H_1(Q)\cong T$. For  $d\geq 3$ let $N_d$ be the $3$-manifold obtained by zero framed surgery on the $3$-component link $L_d$ in Figure~\ref{fig:Ld}. Then $H_1(N_d)\cong \Z^3$ and $N_d$ has its first nonzero Massey product occuring at length $d$ (with $\Z$ or $\Q$ coefficients)~\cite[p.414]{CM3}\cite[Sections6, 7.4]{C4}. In particular it has vanishing cup products (of one dimensional classes). Now let $M_d=Q\# N_d\#_{(2m-1)}(S^1\times S^2)$. Then $H_1(M_d)\cong H$
\begin{figure}[htbp]
\setlength{\unitlength}{1pt}
\begin{picture}(200,110)
\put(0,10){\includegraphics{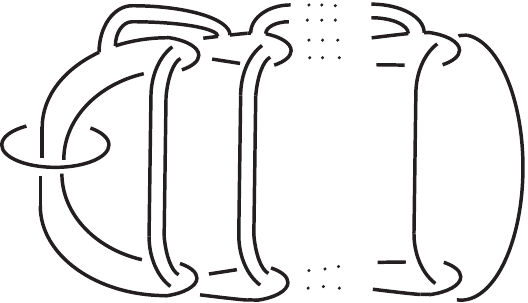}}
\put(60,-4){$d-1$ loops}
\end{picture}\label{fig:Ld}
\caption{$L_d$}
\end{figure}
and $M_d$ has linking form $\mathcal{L}$. Since each $M_d$ has vanishing cup products with rational coefficients, by Corollary~\ref{cor:main}, the homology cobordism class of $M_d$ contains no Seifert fibered manifold. But the homology cobordism classes represented by $M_d$ are distinct for varying $d$ since $M_d$ has rational Massey degree $d$ (see Section~\ref{sec:invtshomcob}).
\end{proof}

\begin{proposition}\label{prop:betaeven} Suppose $m\geq 2$ and $T$ is an abelian group of odd order. Then, for any non-singular linking form $\mathcal{L}$ on $T$ there exist an infinite number of homology cobordism classes of oriented $3$-manifolds, each with first homology $H\cong \Z^{2m}\oplus T$ and linking form $\mathcal{L}$, none of which contains a Seifert fibered manifold. 
\end{proposition}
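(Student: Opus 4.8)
The plan is to mimic the proof of Proposition~\ref{prop:betaodd}, replacing its ``higher Massey product'' ingredient with the triple cup product form: Corollary~\ref{cor:main} forces this form to vanish (rationally) for every manifold that is homology cobordant to a Seifert fibered space of even first Betti number, so it will suffice to build, for each integer $n\geq 1$, a closed oriented $3$-manifold $M_n$ with $H_1(M_n)\cong\Z^{2m}\oplus T$, linking form $\mathcal{L}$, a nontrivial rational cup product of one-dimensional classes, and an integer invariant (extracted from the \emph{integral} triple cup product form) equal to $n$. First, since $T$ has odd order it has no $2$-torsion, so by Bryden--DeLoup \cite{BrydenD} there is a Seifert fibered rational homology sphere $Q$ with $H_1(Q)\cong T$ realizing $\mathcal{L}$. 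Second, for $n\geq 1$ I would take $V_n$ to be zero-framed surgery on a $3$-component link $L_n\subset S^3$ with pairwise linking numbers $0$ and $\overline{\mu}_{L_n}(123)=n$; such links exist by Milnor's realization results, and for such a link $H_1(V_n)\cong\Z^3$ (so $V_n$ has trivial linking form) and, writing $a_1,a_2,a_3\in H^1(V_n;\Z)$ for the duals of the meridians, one has $\langle a_1\cup a_2\cup a_3,[V_n]\rangle=\overline{\mu}_{L_n}(123)=n$ by the classical identification of lowest-order Milnor invariants with triple cup (equivalently Massey) products (compare Proposition~\ref{prop:betaeven2}). In particular $a_1\cup a_2\neq 0$ in $H^2(V_n;\Q)$.

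Then I would set $M_n:=Q\ \#\ V_n\ \#_{(2m-3)}(S^1\times S^2)$; this makes sense since $m\geq 2$ gives $2m-3\geq 1$, and this is exactly where the hypothesis $m\geq 2$ is used, to correct parity (the summand $V_n$ contributes $\beta_1=3$, so at least one $S^1\times S^2$ factor is needed to reach an even total). Then $H_1(M_n)\cong T\oplus\Z^3\oplus\Z^{2m-3}\cong\Z^{2m}\oplus T$, the first Betti number is the even number $2m\geq 4$, and the torsion linking form is $\mathcal{L}$ since the summands other than $Q$ contribute no torsion to $H_1$.

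To show $M_n$ is not homology cobordant to any Seifert fibered manifold, I would use that in a connected sum of $3$-manifolds cup products are ``local'': $H^1(M_n;\Q)$ is the direct sum of the $H^1$'s of the three summands, and cup products of classes from different summands vanish. Hence the nonzero class $a_1\cup a_2\in H^2(V_n;\Q)\subseteq H^2(M_n;\Q)$ exhibits a nontrivial cup product of one-dimensional rational classes on $M_n$, and since $\beta_1(M_n)$ is even, Corollary~\ref{cor:main} shows $M_n$ is not homology cobordant to a Seifert fibered space.

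The only step I expect to require real care is showing the $M_n$ fall into infinitely many homology cobordism classes. One cannot use the \emph{rational} triple cup product form: on a $3$-dimensional $\Q$-vector space the alternating trilinear forms to $\Q$ form a line on which $GL_3(\Q)$ acts transitively away from $0$, so rationally all the $V_n$ have the same triple cup product form. Instead I would work integrally. By Section~\ref{sec:invtshomcob} (or Proposition~\ref{prop:gerges}(a) with $\Z=\Z_0$ coefficients) the isomorphism type of the integral triple cup product form $H^1(M;\Z)^{\times 3}\to\Z$ is an invariant of homology cobordism. By the same locality, this form for $M_n$ is the direct sum of the zero form on $H^1(Q;\Z)=0$, the zero form on $H^1(\#(S^1\times S^2);\Z)$, and the form on $H^1(V_n;\Z)\cong\Z^3$, which in the basis $a_1,a_2,a_3$ is $(x,y,z)\mapsto n\det[x\,|\,y\,|\,z]$. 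Hence the set of values of the integral triple cup product form of $M_n$ is exactly $n\Z$, so its greatest common divisor $n$ is a homology cobordism invariant, and $M_1,M_2,M_3,\dots$ represent pairwise distinct classes. The remaining items are routine to check: the realizability of the $L_n$ and the precise normalization of the identity $\langle a_i\cup a_j\cup a_k,[V_n]\rangle=\overline{\mu}_{L_n}(ijk)$ are both classical, and the additivity of linking forms and (integral) cup products under connected sum along separating $2$-spheres is standard.
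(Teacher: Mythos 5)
Your proposal is correct and takes essentially the same route as the paper: the paper also forms $M_k=Q\#N_k\#_{(2m-3)}(S^1\times S^2)$ where $N_k$ is zero-surgery on a $3$-component link with triple cup product $k$ (realized there explicitly as a $(k,1)$-cable of one component of the Borromean rings), kills the Seifert option via Corollary~\ref{cor:main}, and distinguishes the classes by the integral triple cup product form. Your observation that one must work integrally rather than rationally, and your gcd-of-values invariant, are exactly the content the paper delegates to the cited example in \cite{CGO}.
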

\begin{proof} Since  $T$ contains no $2$-torsion, $\mathcal{L}$ can be realized by a Seifert fibered rational homology sphere $Q$ as above. Let $L_k$ be the $3$-component link obtained by taking a $(k,1)$ cable, $k\geq 1$, of one of the components of the Borromean rings. Let $N_k$ be the $3$-manifold obtained by zero framed surgery on $L_k$. Then $H^1(N_k)\cong \Z^3$ and the triple cup product of the obvious basis elements is $k[N_k]$.  Now let $M_k=Q\# N_k\#_{(2m-3)}(S^1\times S^2)$. Then $M_k$ has first homology $H$ and linking form $\mathcal{L}$. Moreover $N_k$ has non-zero cup products so by Corollary~\ref{cor:main}, no $M_k$ is homology cobordant to a Seifert fibered manifold. These homology cobordism classes are distinct because their triple cup product forms are not isomorphic ~\cite[Example 3.3]{CGO}. 
\end{proof}

Examples with fixed cohomology ring (fixed $k$) can be created by replacing $\#_{(2m-3)}(S^1\times S^2)$ by a manifold non-trivial higher-order Massey products as in Proposition~\ref{prop:betaodd}.

Any $3$-manifold $M$ with $\beta_1=r$ may be obtained as zero framed surgery on an $r$-component link $L$ (with null-homologous componentsa nd pairwise linking numbers zero) in a rational homology sphere $\Sigma$. There are explicit relationships between the Massey products of $M$ and the Massey products of $\Sigma-L$ and Milnor's $\overline{\mu}$-invariants of $L\hookrightarrow \Sigma$ (see for example ~\cite[Section 2]{CM3}). In particular the existence of cup-products is related to Milnor's $\overline{\mu}(ijk)$ of the $3$-component sublinks of $L$. For example if $L'$ is a $3$-component link (with linking numbers zero) in a homology sphere $\Sigma$ then the invariant $\overline{\mu}(123)$ is the algebraic number of triple points occuring when intersecting orientable bounding surfaces for the components. This is also equal to the degree of the map from $M_{L'}$ to the $3$-torus that is induced by the Hurewicz map $\pi_1(M_{L'})\to H_1(M_{L'})\cong \Z^3$. These arise from so-called ``Borromean rings-type interactions'' among the link components. In this language we have:

\begin{proposition}\label{prop:betaeven2} Suppose $M$ is the $3$-manifold obtained by zero framed surgery on a $2r$-component link $L$ in $S^3$ (with pairwise linking numbers zero) for which some Milnor's invariant $\overline{\mu}(ijk)$ is non-zero, then the homology cobordism class of $M$ contains no Seifert fibered manifold.
\end{proposition}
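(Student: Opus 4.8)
The plan is to reduce the statement to the even case of Corollary~\ref{cor:main} by invoking the classical dictionary between the triple cup-product form of a zero-surgery manifold and Milnor's $\overline{\mu}$-invariants of the surgery link. First I would record the relevant topology of $M$: since $L$ has $2r$ components with all pairwise linking numbers zero, zero-framed surgery produces a manifold $M$ with $H_1(M;\Z)\cong\Z^{2r}$, freely generated by the meridians $m_1,\dots,m_{2r}$ of the components. In particular $\beta_1(M)=2r$ is \emph{even} and $H_1(M)$ is torsion-free, so $M$ meets the hypothesis of Theorem~\ref{Main1}. Let $x_1,\dots,x_{2r}\in H^1(M;\Z)$ be the basis dual to the meridians. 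Because $H_1(M)$ is free abelian, Poincar\'e duality makes $H^2(M;\Z)\cong\Z^{2r}$ free abelian as well, so $H^2(M;\Z)\to H^2(M;\Q)$ is injective; hence it suffices to exhibit a single pair $i\neq j$ with $x_i\cup x_j\neq 0$ in $H^2(M;\Z)$.

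Next I would invoke the identification, recalled in the paragraph preceding the statement and worked out in detail in ~\cite[Section~2]{CM3} (see also the treatments of Turaev and Porter), that for zero-framed surgery on a link with vanishing pairwise linking numbers the triple cup-product form on the meridional basis computes Milnor's triple invariants:
$$
\langle x_i\cup x_j\cup x_k,[M]\rangle \;=\; \pm\,\overline{\mu}_L(ijk).
$$
The vanishing of all pairwise linking numbers is precisely what makes each $\overline{\mu}_L(ijk)$ a well-defined integer (it also equals the degree of the map $M\to T^3$ classifying the projection of $H_1(M)$ onto the three meridional coordinates $i,j,k$). By hypothesis some $\overline{\mu}_L(ijk)\neq 0$, so $\langle x_i\cup x_j\cup x_k,[M]\rangle\neq 0$; in particular $x_i\cup x_j$ is nonzero in $H^2(M;\Z)$, hence nonzero in $H^2(M;\Q)$.

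Finally, suppose toward a contradiction that the homology cobordism class of $M$ contained a Seifert fibered manifold. Since $\beta_1(M)=2r$ is even and $H_1(M)$ has no $2$-torsion, the even case of Corollary~\ref{cor:main} would force $\alpha\cup\beta=0$ for \emph{every} $\alpha,\beta\in H^1(M;\Q)$, contradicting $x_i\cup x_j\neq 0$; this proves the proposition (the degenerate case $r=1$, where no index triple exists, makes the hypothesis vacuous). The only step with real content is the middle one: correctly citing and orienting the equality $\langle x_i\cup x_j\cup x_k,[M]\rangle=\pm\overline{\mu}_L(ijk)$ together with the well-definedness of $\overline{\mu}_L(ijk)$ under the vanishing-linking-number hypothesis; once that is in place the conclusion is an immediate application of Corollary~\ref{cor:main}.
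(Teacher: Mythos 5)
Your proof is correct and follows essentially the same route as the paper: reduce to the even case of Corollary~\ref{cor:main} and use the equivalence between nonvanishing triple cup products on the meridional basis and nonvanishing $\overline{\mu}(ijk)$. The paper phrases this last step contrapositively (trivial cohomology ring forces the length-$3$ Milnor invariants to vanish, citing \cite{CGO}), but the substance is identical.
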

\begin{proof} Note $H_1(M;\Z)\cong \Z^{2r}$. If $M$ were homology cobordant to a Seifert fibered space then, by Corollary~\ref{cor:main},  for any $\alpha,\beta\in H^1(M;\Z)$, $\alpha\cup\beta$ is a torsion class in $H^2(M;\Z)$ and hence is zero. Thus the integral cohomology ring of $M$ is the same as that of a connected sum of $2r$ copies of $S^1\times S^2$. Then by ~\cite[Thm.6.10 k=2]{CGO} the Milnor's invariants of length $3$ vanish, contradicting our assumption.
\end{proof}
\begin{proposition}\label{prop:ratsphere} Suppose $M$ is the rational homology $3$-sphere obtained as $p$-surgery on each component of the Borromean rings where $p$ is an odd integer. Then the homology cobordism class of $M$ contains no Seifert fibered manifold.
\end{proposition}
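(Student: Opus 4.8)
The plan is to reduce, via Theorem~\ref{thm:main3}, to showing that $M$ has a nonzero one--dimensional cup product with $\Z_q$ coefficients for a single odd prime $q$, and then to compute that cup product in terms of Milnor's invariant of the Borromean rings. First I would record the elementary homology: since the Borromean rings $L$ have pairwise linking numbers zero, $p$--surgery on all three components gives $H_1(M;\Z)\cong(\Z/p)^{3}$, so $\beta_1(M)=0$ and $M$ is a rational homology sphere. We may assume $p\neq\pm1$ (for $p=\pm1$ one has $M\cong S^3$, which is Seifert fibered), and since $p$ is odd we may fix an odd prime $q$ with $q\mid p$. By Theorem~\ref{thm:main3}, if $M$ were homology cobordant to a Seifert fibered space, the cup product $H^1(M;\Z_q)\times H^1(M;\Z_q)\to H^2(M;\Z_q)$ would vanish identically; so it suffices to prove that this product is nonzero.

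Because $q\mid p$ we have $H^1(M;\Z_q)\cong\Hom((\Z/p)^{3},\Z_q)\cong\Z_q^{\,3}$, with basis $\alpha_1,\alpha_2,\alpha_3$ dual to the three meridians of $L$, and $H^3(M;\Z_q)\cong\Z_q$. Since the cup pairing $H^1\otimes H^2\to H^3$ is nondegenerate, it is enough to show $\langle\alpha_1\cup\alpha_2\cup\alpha_3,[M]\rangle\neq0$ in $\Z_q$. For zero--framed surgery this triple product is exactly $\pm\overline{\mu}_L(123)$, by the identification of triple cup products with Milnor invariants already used in the proof of Proposition~\ref{prop:betaeven2} (see ~\cite[Thm.~6.10]{CGO} and ~\cite[Section~2]{CM3}); for the present nonzero framings the analogous statement with $\Z_q$ coefficients gives $\langle\alpha_1\cup\alpha_2\cup\alpha_3,[M]\rangle\equiv\pm\overline{\mu}_L(123)\pmod q$. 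Equivalently, the surjection $\pi_1(M)\to H_1(M;\Z_q)\cong(\Z/q)^{3}$ classifies a map $f\colon M\to K((\Z_q)^{3},1)$ with $f^{*}(e_1\cup e_2\cup e_3)=\alpha_1\cup\alpha_2\cup\alpha_3$, and the triple product is the component of $f_{*}[M]\in H_3(K((\Z_q)^{3},1);\Z_q)$ pairing with $e_1\cup e_2\cup e_3$, which is computed by the triple intersection number of three $\Z_q$--surfaces Poincar\'e dual to the $\alpha_i$. Since the Borromean rings have $\overline{\mu}(123)=\pm1$ (a well--defined integer, as the pairwise linking numbers vanish), this is $\pm1\neq0$ in $\Z_q$, contradicting Theorem~\ref{thm:main3}; hence no manifold in the homology cobordism class of $M$ is Seifert fibered.

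The step I expect to be the main obstacle is the mod--$q$ framing--independence asserted in the middle paragraph: for nonzero surgery coefficients the longitude $\lambda_i$ is not null--homologous in the filling solid torus, so a $\Z_q$--surface dual to $\alpha_i$ is not simply a Seifert surface of $L$ capped off inside the surgery torus, and one must either invoke a $\Z_q$--coefficient surgery formula for cup (or Massey) products directly or check by hand that the triple intersections contributed by the capping pieces occur with total multiplicity a multiple of $p$, hence vanish modulo $q$. The remaining ingredients --- the homology computation, the appeal to Theorem~\ref{thm:main3}, and the Poincar\'e--duality bookkeeping --- are routine.
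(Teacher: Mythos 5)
Your argument is correct and is essentially the paper's: reduce via Theorem~\ref{thm:main3} to exhibiting a nonzero cup product of one-dimensional classes with odd coefficients, and detect it through the triple intersection number $\overline{\mu}(123)=\pm1$ of surfaces dual to the meridian generators. The ``obstacle'' you flag is resolved exactly as you suggest and as the paper tersely records --- each Seifert surface is already a mod $p$ cycle (its boundary longitude equals $-p$ times a meridian in the filling solid torus), and any capping chains lie in the disjoint filling tori away from the triple points --- while your extra care in passing to an odd prime divisor $q$ of $p$ and excluding $p=\pm1$ (where $M$ is an integral homology sphere and the obstruction is vacuous) tightens points the paper leaves implicit.
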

\begin{proof} By Theorem~\ref{thm:main3} it suffices to show that there is a non-trivial cup product of $1$-dimensional classes with $\Z_p$-coefficients. This is true for $M$ because $\overline{\mu}(123)=\pm 1$ for the Borromean rings. A bounding surface for each component is a mod $p$ cycle and the fact that there is algebraically non-zero triple intersection between the $3$ bounding surfaces means that the triple cup product of their duals is non-trivial.
\end{proof}

\begin{example} Suppose that $M$ is zero framed surgery on each component of the $2$-component Whitehead link. Then $M$ is a Heisenberg manifold, the circle bundle over the torus with Euler class $\pm 1$. Hence $M$ admits a Seifert fibering. Yet $M$ has non-vanishing Massey products of length $3$ since $\overline{\mu}(1122)=\pm 1$ for the Whitehead link ~\cite[Thm3]{Koj1983}. Thus a Seifert fibered manifold with even $\beta_1$ may \textit{fail} to have the same rational Massey products as a connected sum of copies of $S^1\times S^2$. Thus the second part of Theorem~\ref{Main1} cannot be naively extended.
\end{example}

\bibliographystyle{plain}
\bibliography{mybib4_30_2012}

\begin{thebibliography}{10}

\bibitem{Aaslepp}
Kerstin Aaslepp, Michael Drawe, Claude Hayat-Legrand, Christian~A. Sczesny, and
  Heiner Zieschang.
\newblock On the cohomology of {S}eifert and graph manifolds.
\newblock In {\em Proceedings of the {P}acific {I}nstitute for the
  {M}athematical {S}ciences {W}orkshop ``{I}nvariants of {T}hree-{M}anifolds''
  ({C}algary, {AB}, 1999)}, volume 127, pages 3--32, 2003.

\bibitem{Bredon1997}
Glen~E. Bredon.
\newblock {\em Topology and geometry}, volume 139 of {\em Graduate Texts in
  Mathematics}.
\newblock Springer-Verlag, New York, 1997.
\newblock Corrected third printing of the 1993 original.

\bibitem{BrydenD}
J.~Bryden and F.~Deloup.
\newblock A linking form conjecture for 3-manifolds.
\newblock In {\em Advances in topological quantum field theory}, volume 179 of
  {\em NATO Sci. Ser. II Math. Phys. Chem.}, pages 253--265. Kluwer Acad.
  Publ., Dordrecht, 2004.

\bibitem{BrydenZ2}
J.~Bryden and P.~Zvengrowski.
\newblock The cohomology ring of the orientable {S}eifert manifolds. {II}.
\newblock In {\em Proceedings of the {P}acific {I}nstitute for the
  {M}athematical {S}ciences {W}orkshop ``{I}nvariants of {T}hree-{M}anifolds''
  ({C}algary, {AB}, 1999)}, volume 127, pages 213--257, 2003.

\bibitem{CM3}
Tim Cochran and Paul Melvin.
\newblock The {M}ilnor degree of a 3-manifold.
\newblock {\em J. Topol.}, 3(2):405--423, 2010.

\bibitem{C4}
Tim~D. Cochran.
\newblock Derivatives of links: {M}ilnor's concordance invariants and
  {M}assey's products.
\newblock {\em Mem. Amer. Math. Soc.}, 84(427):x+73, 1990.

\bibitem{CGO}
Tim~D. Cochran, Amir Gerges, and Kent Orr.
\newblock Dehn surgery equivalence relations on 3-manifolds.
\newblock {\em Math. Proc. Cambridge Philos. Soc.}, 131(1):97--127, 2001.

\bibitem{Hatcher}
Allen Hatcher.
\newblock Notes on basic $3$-manifold topology.
\newblock Preprint, http://www.math.cornell.edu/~hatcher/3M/3M.pdf.

\bibitem{HillmanSFS}
Jonathan~A. Hillman.
\newblock The linking pairings of orientable {S}eifert manifolds.
\newblock {\em Topology Appl.}, 158(3):468--478, 2011.

\bibitem{Hutchings}
Michael Hutchings.
\newblock Cup product and intersections.
\newblock 2011.
\newblock preprint available at
  http://www.osti.gov/eprints/topicpages/documents/record/635/1949160.html.

\bibitem{Koj1983}
Sadayoshi Kojima.
\newblock Milnor's {$\bar \mu $}-invariants, {M}assey products and {W}hitney's
  trick in {$4$} dimensions.
\newblock {\em Topology Appl.}, 16(1):43--60, 1983.

\bibitem{Li11}
Charles Livingston.
\newblock Homology cobordisms of {$3$}-manifolds, knot concordances, and prime
  knots.
\newblock {\em Pacific J. Math.}, 94(1):193--206, 1981.

\bibitem{MKS}
Wilhelm Magnus, Abraham Karrass, and Donald Solitar.
\newblock {\em Combinatorial group theory}.
\newblock Dover Publications Inc., Mineola, NY, second edition, 2004.
\newblock Presentations of groups in terms of generators and relations.

\bibitem{Myers1}
Robert Myers.
\newblock Homology cobordisms, link concordances, and hyperbolic
  {$3$}-manifolds.
\newblock {\em Trans. Amer. Math. Soc.}, 278(1):271--288, 1983.

\bibitem{Saveliev2002}
Nikolai Saveliev.
\newblock {\em Invariants for homology {$3$}-spheres}, volume 140 of {\em
  Encyclopaedia of Mathematical Sciences}.
\newblock Springer-Verlag, Berlin, 2002.
\newblock Low-Dimensional Topology, I.

\bibitem{St}
John Stallings.
\newblock Homology and central series of groups.
\newblock {\em J. Algebra}, 2:170--181, 1965.

\end{thebibliography}

\end{document}